\documentclass{amsart}
\usepackage{graphicx} 
\usepackage{xcolor}
\usepackage{amsmath, amsthm, amssymb, amsfonts}
\usepackage{hyperref}

\usepackage[skip=10pt plus1pt, indent=15pt]{parskip}

\theoremstyle{plain}
\newtheorem{theo}{Theorem}[section]

\newtheorem{lemma}[theo]{Lemma}

\newtheorem{coro}[theo]{Corollary}

\theoremstyle{definition}

\newcommand{\C}{\mathbb{C}}
\newcommand{\Z}{\mathbb{Z}}
\newcommand{\D}{\mathbb{D}}
\newcommand{\T}{\mathbb{T}}
\newcommand{\Hol}{\mathcal{H}(\D)}

\renewcommand{\Re}{\operatorname{Re}}

\title[Norm of $B$ on $H^4$]{The norm of the backward shift on $H^4$ is $\sqrt[4]{\varphi}$}

\author[K. Bampouras]{Konstantinos Bampouras}
\address{Sabanci University Tuzla Campus, Orta Mahalle, Üniversite Cadesi No:27 Tuzla, 34956 Istanbul, Turkey}
\email{kostasbaburas@gmail.com}

\author[A. Llinares]{Adri\'an Llinares}
\address{Departamento de Matem\'aticas, Facultad de Ciencias, Universidad Aut\'onoma de Madrid. C/ Francisco Tom\'as y Valiente 7, 28049 Madrid, Spain}
\email{adrian.llinares@uam.es}

\date{\today}

\keywords{Backward shift operator; Hardy spaces; Inner functions; Riesz projection}

\subjclass[2020]{47B38 (Primary); 30H10, 30J05 (Secondary).}

\begin{document}
	
	\begin{abstract}
		We prove that the backward shift operator on $H^4$ has norm equal to $\sqrt[4]{\varphi}$, with $\varphi = \frac{1 + \sqrt{5}}{2}$. Furthermore, we characterize all extremal functions; they are precisely the functions of the form
		\[
		f(z) = \mu \left( I(z) - \sqrt{\frac{1}{2\varphi}}\right),
		\]
		where $\mu \in \C$ and $I$ is an inner function with $I(0) = \sqrt{\frac{\varphi}{2}}$.
	\end{abstract}
	
	\maketitle
	
	\section{Introduction}
	
    	Let $\Hol$ be the set of all holomorphic functions in the unit disc $\D$. Given $f \in \Hol$, we define its backward shift as
    	\[
    	Bf(z) := \dfrac{f(z) - f(0)}{z}, \quad z \neq 0,
    	\]
    	and $Bf(0) := f'(0)$. If $f$ belongs to the Hardy space $H^p$ (that is, the subspace of $\Hol$ such that the quantity
    	\[
    	\| f \|_{H^p}^p := \sup_{0 \leq r < 1} \left \{ \dfrac{1}{2\pi} \int_0^{2\pi} |f(re^{it})|^p \, dt \right\}
    	\]
    	is finite), then $f$ has radial limits almost everywhere on the unit circle $\T$, this boundary function determines the norm of $f$ and, from this point of view, it is clear that
    	\[
    	\| Bf \|_{H^p} = \| f - f(0) \|_{H^p},
    	\]
    	so $B$ is a bounded operator in $H^p$. Observe that, in this setting, $B$ coincides with the adjoint operator of the (forward) shift operator $Sf (z) = zf(z)$.
    	
    	These two operators are very classical and of special relevance. Recall for instance Beurling's celebrated paper \cite{MR27954}, a capital work in Operator Theory where the invariant subspaces of $S$ are characterized (and the term ``inner function" is coined to refer to bounded analytic functions with unimodular boundary values almost everywhere). Another important result involving $B$ and inner functions is the description of its non-cyclic vectors by Douglas, Shapiro and Shields \cite{MR270196}. They showed that a function $f \in H^p$ ($1 < p <\infty$) is non-cyclic if and only if there exist two inner functions $I$ and $J$ such that
    	\[
    	\frac{f}{\overline{f}} = \frac{I}{J}, \quad \mbox{ almost everywhere on } \T.
    	\]
    	See the monographs \cite{MR1761913} and \cite{MR827223} for a detailed account of the properties of these operators. Despite the extensive list of known facts of $B$, the computation of its norm is still nowadays an open problem.
    	
    	To the best of our knowledge, there are only three values of $p$ for which the norm is known. One of course is $p=2$, which is trivial. In fact, $L^2 (\T)$ is the unique Banach function space where $B$ is contractive \cite{MR4643433}. Explicit counterexamples for contractivity can be found in \cite[Lemma 2.3]{MR4386412}. Lower bounds and other estimates (also on Bergman spaces) will be found in the forthcoming preprint \cite{JimenezVukotic}.
    	
    	If $p = \infty$, Ferguson \cite{MR3770837} showed that the norm of $B$ is equal to $2$ by testing with a sequence of inner functions with values at zero approaching the boundary.
    	
    	Ferguson also found a non-trivial estimate for $p = 1$. However, the sharp bound is due to Brevig and Seip \cite{MR4796279} who recently proved that
    	\[
    	\| Bf \|_{H^1} \leq \dfrac{2}{\sqrt{3}} \|f\|_{H^1}
    	\]
    	and that equality is attained if and only if $f$ is a constant multiple of
    	\[
    	\left(\dfrac{\sqrt{3} + I(z)}{\sqrt{3} - I(z)} \right)^2,
    	\]
    	where $I$ is an inner function vanishing at the origin.
    	
    	The main goal of this work is to broaden our understanding of the operator $B$ and its connection to inner functions. To this end, we will compute the norm of $B$ on $H^4$ and show that the extremal functions are determined by inner functions with a prescribed value at the origin. 
    	
    	Throughout this paper, we will use the notation $\varphi = \frac{1 + \sqrt{5}}{2}$. We recall that $\varphi$ satisfies the identity $\varphi^2 = \varphi + 1$, which might be useful in facilitating the understanding of some of the following arguments.
    	
    	\begin{theo} \label{thm:NormH4}
    		If $f \in H^4$, then
    		\[
    		\| B f \|_{H^4} \leq \sqrt[4]{\varphi} \| f\|_{H^4}.
    		\]
    		Moreover, equality is attained if and only if $f$ is a constant multiple of
    		\[
    		I(z) - \sqrt{\dfrac{1}{2\varphi}},
    		\]
    		where $I$ is an inner function with $I(0)=\sqrt{\frac{\varphi}{2}}$.
    	\end{theo}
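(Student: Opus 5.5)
The plan is to expand the $L^4$ norm and reduce everything to one Cauchy--Schwarz inequality whose equality case forces an inner function. Write $f = a + g$ with $a = f(0)$ and $g(0)=0$. Then $\|Bf\|_{H^4} = \|g\|_{H^4}$, and the claim becomes $\|g\|_4^4 \le \varphi \|a+g\|_4^4$.

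On $\T$ we have
\[
|a+g|^4 = \bigl(|a|^2 + 2\Re(\bar a g) + |g|^2\bigr)^2 .
\]
We expand this and integrate, using that $g$ and $g^2$ have mean zero and that $(\Re \bar a g)^2 = \tfrac12\bigl(|a|^2|g|^2 + \Re(\bar a^2 g^2)\bigr)$. This gives
\[
\|a+g\|_4^4 = |a|^4 + 4|a|^2\|g\|_2^2 + \|g\|_4^4 + 4\int_\T \Re(\bar a g)\,|g|^2 .
\]
So the only term of indefinite sign is the cubic one, and the whole proof hinges on bounding it sharply.

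Set $m = \|g\|_2^2$. Because $\Re(\bar a g)$ has mean zero, we may replace $|g|^2$ by $|g|^2 - m$ in the cubic term. Cauchy--Schwarz then gives
\[
\Bigl|\int_\T \Re(\bar a g)\,|g|^2\Bigr| \le \|\Re(\bar a g)\|_2 \,\bigl\||g|^2 - m\bigr\|_2 = \frac{|a|\,\|g\|_2}{\sqrt2}\,\sqrt{\|g\|_4^4 - \|g\|_2^4},
\]
where $\|\Re(\bar a g)\|_2^2 = |a|^2 m/2$ again uses that $g^2$ has mean zero. The key point is the equality case. Equality forces $|g|^2 - m = \lambda \Re(\bar a g)$ a.e. for some real $\lambda$, which rearranges to $|g - \lambda a/2|^2 = \text{const}$ on $\T$. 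Hence $g - \lambda a/2$ is a constant multiple of an inner function. This is exactly where the inner functions in the statement come from.

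Writing $t=|a|$, $x=\|g\|_2$ and $Y=\|g\|_4^4 \ge x^4$, it suffices to prove the scalar inequality
\[
\Phi(t,x,Y) := \varphi t^4 + 4\varphi t^2x^2 + (\varphi-1)Y - 2\sqrt2\,\varphi\, t x\sqrt{Y-x^4} \;\ge\; 0 .
\]
By homogeneity we normalize $x=1$ and put $Y = 1+s^2$ with $s \ge 0$. The expression $\varphi t^4 + 4\varphi t^2 + (\varphi-1)(1+s^2) - 2\sqrt2\varphi ts$ is quadratic in $s$. Minimizing over $s$ gives $s = \sqrt2\varphi t/(\varphi-1) = \sqrt2\varphi^2 t$, using $\varphi-1 = 1/\varphi$. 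What remains is a polynomial inequality in $t^2$, namely $\varphi t^4 + (4\varphi - 2\varphi^3)t^2 + (\varphi-1) \ge 0$. Since $4\varphi - 2\varphi^3 = 2\varphi(2-\varphi^2) = -2\varphi^2 + 2\varphi\cdot 1 \cdot(1-\varphi)+\dots$, this should reduce via $\varphi^2=\varphi+1$ to a perfect square in $t^2$. That yields $\Phi\ge0$ with a unique minimizing configuration $(t,s)$. I expect this routine golden-ratio algebra to close, but it is where the constant $\sqrt[4]{\varphi}$ must appear. If the quartic is not a perfect square, the fallback is to recheck the phase: the worst case requires $\bar a\int g|g|^2$ to be real and negative, which is achievable by rotating $a$.

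For the equality characterization, equality in the theorem forces equality in the Cauchy--Schwarz step, in the phase alignment, and at the scalar minimizer. The first gives $g = \lambda a/2 + \mu J$ with $J$ inner. Since $g(0)=0$, we get $\mu J(0) = -\lambda a/2$, so $f = a + g$ is an affine function of $J$. The scalar minimizer fixes the ratios among $|a|$, $\|g\|_2^2 = |\mu|^2(1-|J(0)|^2)$ and $\|g\|_4^4$. Solving these should pin down $|J(0)| = \sqrt{\varphi/2}$ and $f = \mu'(I - \sqrt{1/(2\varphi)})$ after a unimodular rotation $I = e^{i\theta}J$. Conversely, one checks directly that these functions attain equality. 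The main obstacle I anticipate is the bookkeeping in this last step, matching the three equality conditions consistently, rather than the inequality itself.
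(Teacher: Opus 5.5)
Your proof is correct, and the algebra you left open does close, so the phase ``fallback'' is unnecessary. Since $\varphi^3 = 2\varphi+1$, you get $4\varphi-2\varphi^3=-2$. In the normalized variables ($x=1$) one has exactly
\[
\Phi = (\varphi-1)\bigl(s-\sqrt{2}\,\varphi^{2} t\bigr)^{2} + \varphi\bigl(t^{2}-\varphi^{-1}\bigr)^{2}.
\]
The equality bookkeeping also works out:
\begin{itemize}
\item With $\rho=|J(0)|$ and $\|BJ\|_4^4=1-\rho^4$, you get $s^2=Y/x^4-1=2\rho^2/(1-\rho^2)$. The condition $s^2=2\varphi^4t^2=2\varphi^3$ then forces $\rho^2=\varphi^3/(1+\varphi^3)=\varphi/2$.
\item The cubic term must take the negative extreme, which forces $\lambda<0$. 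Hence $a=c\,\mu J(0)$ with $c>0$.
\item The condition $t^2=x^2/\varphi$ gives $c^2=(2-\varphi)/\varphi^2=(2-\varphi)^2$. Therefore $f=\mu\bigl(J-\varphi^{-1}J(0)\bigr)$, which is the stated form after rotating $J$.
\end{itemize}

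Your route differs from the paper's. The paper argues via an extremal sequence $\|f_n\|_4=1$, $\|Bf_n\|_4\to M$. It uses that $B$ annihilates constants to derive the first-order condition $\langle f_n^2,f_n\rangle\to 0$. That condition pins down the cubic quantity $\langle h^2,Bh\rangle$ for $h=Bf$ as $-x^3-2xs$. The paper then applies Lemma~\ref{lemma} to $h$ and maximizes the one-variable function $F(r)$. The equality case is obtained by rerunning the whole argument with $f_n=f$.

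Your core estimate is the same inequality. Since $g=zh$, you have $\int_\T g|g|^2=\langle h^2,Bh\rangle$, and your real Cauchy--Schwarz against $|g|^2-m$ is Lemma~\ref{lemma} read in the direction of $a$. The difference is that you keep $a=f(0)$ as a free parameter and absorb it by completing squares, rather than fixing it through a variational condition.

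What your approach buys is a direct, pointwise proof valid for every $f$. It needs no subsequences, no limits, and no appeal to $M>1$, and the equality case is read off from the vanishing of two squares plus Cauchy--Schwarz equality with the correct sign. What the paper's approach buys is that it isolates the optimal constant shift explicitly and reduces the problem to an elementary single-variable calculus problem.
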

	
	    Proof of Theorem \ref{thm:NormH4} is split into two parts. The upper bound for the norm is proven in Section \ref{sec:UpperBound}. It is shown in Section \ref{sec:NormAttaining} that $B$ is a norm-attaining operator on $H^4$ and all the extremal functions are described. Finally, Section \ref{sec:ConcludingRemarks} contains some concluding remarks regarding the norms of $B$ and its iterates.
	
    	\subsection*{Acknowledgments}
    	
    	The work of the second author was partially supported by the grant PID2024-160326NA-I00 from the Spanish Ministry of Science and Innovation.
	
	\section{Proof of the upper bound} \label{sec:UpperBound}
	
        To prove the upper bound, we will use the following lemma.
        
        \begin{lemma}\label{lemma}
            For every $f \in H^4$ we have that
            \begin{equation} \label{eqn:CubicBound}
                |\langle f^2, Bf \rangle|^2 \leq \dfrac{1}{2} \| f \|_{H^2}^2 ( \| f \|_{H^4}^4-\|f\|_{H^2}^4 ),
            \end{equation}
            where $\langle \cdot, \cdot \rangle$ denotes the inner product of $L^2 (\T)$. Moreover, equality holds if and only if $f$ is a constant multiple of the backward shift of an inner function.
        \end{lemma}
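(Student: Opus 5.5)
The plan is to rewrite the left-hand side of \eqref{eqn:CubicBound} as an inner product between $zf$ and the analytic part of the real function $|f|^2 - \|f\|_{H^2}^2$, and then apply the Cauchy--Schwarz inequality. Equality in Cauchy--Schwarz will then turn into a unimodularity condition, which produces the inner function.

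\emph{Step 1: rewriting the pairing.} On $\T$ we have $Bf = \bar z (f - f(0))$. Hence
\[
\langle f^2, Bf\rangle = \langle z f^2, f - f(0)\rangle = \langle z f^2 , f\rangle = \int_\T z f\,|f|^2\,dm,
\]
because $zf^2$ vanishes at the origin and is therefore orthogonal to constants. Put $g := |f|^2 - \|f\|_{H^2}^2$. It is real, has mean zero, and lies in $L^2(\T)$ since $f\in H^4$. Since $\int_\T zf\,dm = 0$, we get $\langle f^2,Bf\rangle = \int_\T zf\, g\, dm$.

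Decompose $g = h + \bar h$, where $h$ is the Riesz projection of $g$ onto the strictly positive frequencies, so $h \in H^2$ and $h(0)=0$; this uses that $g$ is real with mean zero. The function $zf$ has only frequencies $\ge 1$. Therefore $\int zf\,h\,dm = 0$, and
\[
\langle f^2, Bf\rangle = \int_\T zf\,\bar h\,dm = \langle zf, h\rangle .
\]

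\emph{Step 2: Cauchy--Schwarz.} We obtain $|\langle f^2,Bf\rangle|^2 \le \|f\|_{H^2}^2\|h\|_{H^2}^2$. By orthogonality of $h$ and $\bar h$,
\[
\|g\|_{2}^2 = 2\|h\|_{H^2}^2 .
\]
Expanding the square gives $\|g\|_2^2 = \|f\|_{H^4}^4 - 2\|f\|_{H^2}^2\|f\|_{H^2}^2 + \|f\|_{H^2}^4 = \|f\|_{H^4}^4 - \|f\|_{H^2}^4$. Combining these yields \eqref{eqn:CubicBound}.

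\emph{Step 3: the equality case.} This is the only part needing some care. Equality holds exactly when $zf$ and $h$ are linearly dependent.

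If $f\equiv 0$ there is nothing to prove. Otherwise equality means $h = \lambda z f$ for some $\lambda\in\C$. Then $|f|^2 - \|f\|_{H^2}^2 = 2\Re(\lambda z f)$ a.e. on $\T$. Completing the square,
\[
|zf - \bar\lambda|^2 = |f|^2 - 2\Re(\lambda z f) + |\lambda|^2
\]
is a.e. equal to the constant $\|f\|_{H^2}^2 + |\lambda|^2 > 0$. Hence $zf - \bar\lambda = cI$ with $I$ inner and $c \neq 0$. Consequently $zf = cI - cI(0)$, that is, $f = c\,BI$. The case $\lambda = 0$ is included: then $|f|$ is constant and $I = zf/c$.

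Conversely, suppose $f = BI$, so $zf = I - I(0)$. Then
\[
|f|^2 = 1 - 2\Re\bigl(\overline{I(0)}\,I\bigr) + |I(0)|^2,
\]
which gives $g = -2\Re\bigl(\overline{I(0)}(I - I(0))\bigr)$. Thus $h = -\overline{I(0)}\,zf$, and equality holds in Cauchy--Schwarz.

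The main subtlety I expect is justifying the decomposition $g = h+\bar h$ with $h\in H^2$ under only $f\in H^4$, and recognising the completed-square step that converts the equality condition into unimodularity of $zf - \bar\lambda$.
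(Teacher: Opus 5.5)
Your proposal is correct and is essentially the paper's argument. Your pairing $\langle zf, h\rangle$ is the paper's $\langle f, P_+(\overline{z}|f|^2)\rangle$ multiplied through by $z$ (on the circle $P_+(\overline{z}|f|^2)=\overline{z}h$). The identity $\|h\|_{H^2}^2=\tfrac12(\|f\|_{H^4}^4-\|f\|_{H^2}^4)$ and the completed-square argument forcing $zf-\overline{\lambda}$ to have constant modulus are also the same as in the paper. One small addition on your side: you explicitly check the converse, that $f=c\,BI$ gives equality via $h=-\overline{I(0)}\,zf$; the paper states this direction but does not write it out.
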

        
        \begin{proof}
            
            For almost every point of $\T$ we have that
            \[
            Bf(z) = \overline{z} \big( f(z) - f(0) \big).
            \]
            Since $z f^2(z)$ vanishes at the origin, we have that
            \begin{align*}
                \langle f^2, Bf \rangle & = \dfrac{1}{2\pi} \int_0^{2\pi} f^2 (e^{it}) \overline{Bf(e^{it})} \, dt \\
                & = \dfrac{1}{2\pi} \int_0^{2\pi} e^{it} f^2 (e^{it}) \big( \overline{f(e^{it})} - \overline{f(0)} \big) \, dt \\
                & = \dfrac{1}{2\pi} \int_{0}^{2\pi} e^{it} |f(e^{it})|^2 f(e^{it}) \, dt.
            \end{align*}
            If $P_+$ represents the Riesz projection (i.e., the orthogonal projection from $L^2(\T)$ onto $H^2$) we will have that the following identity holds:
            \[
            \langle f^2, B f \rangle = \langle f, P_+ ( \overline{z} |f|^2 ) \rangle.
            \]
            Hence, Cauchy-Schwarz inequality yields that
            \begin{equation} \label{eqn:CS}
                | \langle f^2, B f \rangle | \leq \| f \|_{H^2} \| P_+ ( \overline{z} |f|^2 ) \|_{H^2}.
            \end{equation}
            
            Let $\{ b_k \}_{k \in \Z}$ be the Fourier coefficients of $|f|^2 \in L^2(\T)$, which is a real-valued function and therefore we have that $b_k = \overline{b_{-k}}$ for all $k < 0$. Observe that $\| f \|_{H^2}^2 = b_0$ and
            \[
            \| P_+ (\overline{z}|f|^2) \|_{H^2}^2 = \sum_{k = 1}^\infty |b_k|^2 = \dfrac{\|f\|_{H^4}^4 - \| f \|_{H^2}^4}{2},
            \]
            which proves that inequality \eqref{eqn:CubicBound} holds.
            
            Now, assume that equality in \eqref{eqn:CubicBound} is attained for certain function $f \in H^4$. In particular, equality is also attained in \eqref{eqn:CS} and therefore there exists a constant $\lambda \in \C$ such that $P_+ (\overline{z}|f|^2) = \lambda f$.
            
            If $g(z) = z f(z)$, the identity above becomes
            \[
            \lambda g(z) = \sum_{k = 1}^\infty b_k z^k
            \]
            and then
            \[
            |g(z)|^2 = |f(z)|^2 = \|f\|_{H^2}^2 + 2 \Re \{ \lambda g(z) \}
            \]
            almost everywhere in $\T$. In other words, the function $g - \overline{\lambda} \in H^4$ has constant modulus almost everywhere and that means there exists an inner function $I$ and $\mu \in \C$ such that
            \[
            g(z) - \overline{\lambda} = \mu I(z), \quad z \in \D.
            \]
            Finally, $g$ vanishes at 0 so
            \[
            f = Bg = \mu B I.
            \]
        \end{proof}
        
        \begin{theo} \label{thm:UpperBound}
            If $f \in H^4$, then
            \[
            \| Bf \|_{H^4}^4 \leq \varphi \| f \|_{H^4}^4.
            \]
        \end{theo}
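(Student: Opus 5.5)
The plan is to write $f = a + zg$ with $a = f(0)$ and $g = Bf$, expand $\|f\|_{H^4}^4$ on $\T$ in terms of $a$ and $g$, and find that the only term not expressible through $\|g\|_{H^2}$ and $\|g\|_{H^4}$ is exactly the cubic pairing $\langle g^2, Bg\rangle$ controlled by Lemma \ref{lemma}. After that, everything should reduce to an elementary two-variable inequality.

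\textbf{Step 1 (expansion).} On $\T$ we have
\[
|f|^2 = |a|^2 + |g|^2 + 2\Re(\overline{a} z g).
\]
Squaring and integrating, we use that $zg$ and $z^2g^2$ have mean zero. We also use that, by the computation in the proof of Lemma \ref{lemma},
\[
\frac{1}{2\pi}\int_0^{2\pi} e^{it}|g(e^{it})|^2 g(e^{it})\,dt = \langle g^2, Bg\rangle .
\]
Together with $4\int \Re(\overline{a}zg)^2 = 2|a|^2\|g\|_{H^2}^2$, this should give
\[
\|f\|_{H^4}^4 = |a|^4 + 4|a|^2\|g\|_{H^2}^2 + \|g\|_{H^4}^4 + 4\Re\big(\overline{a}\,\langle g^2,Bg\rangle\big).
\]

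\textbf{Step 2 (apply the Lemma).} Set $A = \|g\|_{H^2}^2$ and $G = \|g\|_{H^4}^4$, and let $C \ge 0$ satisfy $C^2 = A(G - A^2)/2$. By Lemma \ref{lemma} applied to $g$, the last term in Step 1 is at least $-4|a|C$.

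\textbf{Step 3 (reduce to an elementary inequality).} Since $1 - 1/\varphi = 1/\varphi^2$, the desired inequality $\|g\|_{H^4}^4 \le \varphi\|f\|_{H^4}^4$ follows if
\[
t^4 + 4At^2 - 4Ct + \frac{G}{\varphi^2} \ge 0, \qquad t = |a| \ge 0 .
\]
By homogeneity we normalize $A = 1$, so that $G = 1 + 2C^2$. The expression then becomes a quadratic in $C$:
\[
\frac{2}{\varphi^2}C^2 - 4tC + \Big(t^4 + 4t^2 + \frac{1}{\varphi^2}\Big).
\]
Its leading coefficient is positive, so it is nonnegative for all $C$ as soon as its discriminant is nonpositive. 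That condition reads $2\varphi^2 t^2 \le t^4 + 4t^2 + \varphi^{-2}$. Using $\varphi^2 = \varphi + 1$ we get $4 - 2\varphi^2 = -2/\varphi$, so the condition is
\[
t^4 - \frac{2}{\varphi}t^2 + \frac{1}{\varphi^2} = \Big(t^2 - \frac{1}{\varphi}\Big)^2 \ge 0,
\]
which holds trivially. This completes the bound.

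The main obstacle is really Step 1: correctly identifying the cross term with $\langle g^2, Bg\rangle$, including conjugations and phases, so that Lemma \ref{lemma} applies directly. The rest is algebra. The equality analysis (needed later) should come out of the same computation: equality in Lemma \ref{lemma}, a phase alignment of $a$ with $\langle g^2,Bg\rangle$, $|a|^2 = \|g\|_{H^2}^2/\varphi$, and equality in the discriminant step.
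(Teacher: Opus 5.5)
Your proof is correct, and it takes a genuinely different and more direct route than the paper's.

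\textbf{The paper's route.} The paper argues through extremality:
\begin{itemize}
\item It normalizes $\|f_n\|_{H^4}=1$ along a maximizing sequence.
\item It uses the invariance of $B$ under adding constants to extract the first-order condition $\langle f_n^2,f_n\rangle\to 0$. This pins the cubic pairing down exactly as $\langle h_n^2,Bh_n\rangle\to -x^3-2xs$.
\item It applies the Lemma to $h_n=Bf_n$ to get a constraint on $(x,s)$.
\item It finishes by maximizing $F(r)=\frac{4r+3r^2}{(r+1)^2(2r+1)}$ with calculus. This step also needs $M>1$, so that $r=x^2/s$ is defined.
\end{itemize}

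\textbf{Your route.} You work with an arbitrary $f$. Your exact expansion of $\|f\|_{H^4}^4$ is right: inserting the paper's limiting value of the pairing into it reproduces $M^4=1+4x^2s+3x^4$. The Lemma controls the single term not expressible through norms of $g$, and the discriminant closes the argument. In fact, with $A=1$ your quadratic is the explicit sum of squares $\frac{2}{\varphi^2}(C-\varphi^2 t)^2+\bigl(t^2-\frac{1}{\varphi}\bigr)^2$. So you get a pointwise deficit inequality valid for every $f$, with no subsequences, no differentiation of a $\limsup$, and no one-variable optimization.

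\textbf{Equality conditions.} These fall out at once from your identity:
\begin{itemize}
\item $g$ is a multiple of $BI$, by the Lemma;
\item $\overline{a}\langle g^2,Bg\rangle=-|a|C$;
\item $|a|^2=\|Bf\|_{H^2}^2/\varphi$, which is the paper's critical point $r=\varphi-1$;
\item $C=\varphi^2\|Bf\|_{H^2}^2|a|$.
\end{itemize}
Note that your phrase ``equality in the discriminant step'' must include this last vertex condition, not just a zero discriminant.

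\textbf{Comparison.} What the paper's approach buys is mainly a framework it reuses verbatim for the extremal characterization (taking $f_n=f$), plus an interpretation of the optimal $f(0)$ via $\langle f^2,f\rangle=0$. Your identity would serve that purpose equally well.

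\textbf{Two small points to tidy.} Handle $Bf=0$ separately before normalizing $A=1$. Also note that $G\ge A^2$ by H\"older, so that $C$ is real.
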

        
        \begin{proof}
            
            Let $M$ be the norm of $B$ on $H^4$. Take $\{ f_n \}_{n \geq 1}$ with $\| f_n \|_{H^4} = 1$ and
            \[
            \lim_{n \to \infty} \| Bf_n \|_{H^4} = M.
            \]
            Without loss of generality, assume that $f_n(0) \geq 0$. Also (after considering a suitable subsequence), we may assume that $\{ f_n (0) \}_{n \geq 1}$ and $\{ \| f_n \|_{H^2} \}_{n \geq 1}$ converge to two quantities $x$ and $A$, respectively. We may also assume that $\{\langle f_n^2,f_n\rangle\}_{n\geq 1}$ converges to a complex number.
            
            For every $a \in \C$, define $g(a) := \limsup_{n \to \infty} \| f_n + a \|_{H^4}^4$. Then, $g(a) \geq 1 = g(0)$ for all $a$ because of the ``extremality'' of $\{ f_n \}_{n \geq 1}$ and the fact that $B$ is invariant under translations by constants. That means that
            \[
            0 \leq g(a) - 1 \leq 4 \lim_{n \to \infty} \dfrac{1}{2\pi} \int_{0}^{2\pi} |f_n (e^{it})|^2 \Re \{{\overline{a}f_n(e^{it})} \} \, dt + O(|a|^2), \quad a \to 0
            \]
            and dividing by $|a|$ and letting $a \to 0$ we necessarily have that
            \[
            0 \leq \lim_{n \to \infty} \Re \big \{ \xi \langle f_n^2, f_n \rangle \big \}, \quad \forall \xi \in \T.
            \]
            This yields that $\lim_{n \to \infty} \langle f_n^2, f_n \rangle = 0$.
            
            Set $h_n = B f_n$ and $s = \lim_{n \to \infty} \| h_n \|_{H^2}^2 = A^2 - x^2$. Then we have that
            \begin{align}
                M^4 & = \lim_{n \to \infty} \big \| \big( f_n - f_n(0) \big)^2 \big \|_{H^2}^2 \nonumber \\
                & = \lim_{n \to \infty} \| f_n^2 - 2f_n f_n(0) + f_n^2(0)\|_{H^2}^2 \nonumber \\
                & = 1 + 4x^2 A^2 - x^4 = 1 + 4x^2 s + 3x^4. \label{eqn:aux1}
            \end{align}
            Furthermore, since
            \[
            \langle f_n^2, f_n \rangle = f_n^3(0) + 2f_n(0) \| h_n \|_{H^2}^2 + \langle z h_n^2, h_n \rangle,
            \]
            we have that $\{ \langle h_n^2, B h_n \rangle \}_{n \geq 1}$ converges to $-x^3 - 2xs$, which is negative.
            
            Applying \eqref{eqn:CubicBound} to $h_n$ and taking limits yield that
            \[
            x^2(x^2+2s)^2 \leq \dfrac{s}{2} (M^4 - s^2).
            \]
            Taking into account \eqref{eqn:aux1}, we have that $x$ and $s$ must satisfy the condition
            \begin{equation} \label{eqn:aux2}
                2x^2 (x^2 + 2s)^2 \leq s (1 + 4x^2s + 3x^4 - s^2).
            \end{equation}
            We will show from here that $M^4 \leq \varphi$.
            
            A straightforward computation shows that \eqref{eqn:aux2} is equivalent to
            \begin{equation} \label{eqn:aux3}
                2x^6 + 5x^4s+4x^2s^2+s^3 \leq s.
            \end{equation}
            Recall that $M > 1$ ($B$ is not contractive in $H^4$). According to \eqref{eqn:aux1}, this means that $x \neq 0$ and hence $s$ is different than $0$ as well. If $r = \frac{x^2}{s} > 0$, \eqref{eqn:aux3} becomes
            \[
            s^2 (r + 1)^2 (2r + 1) = s^2 (2r^3 + 5r^2 + 4r + 1) \leq 1.
            \]
            This provides an upper bound for $s^2$, which applied to \eqref{eqn:aux1} leads to
            \[
            M^4 = 1 + s^2 (4r+3r^2) \leq 1 + \dfrac{4r + 3r^2}{(r+1)^2 (2r+1)}.
            \]
            
            It remains to compute the maximum of the function
            \begin{equation} \label{eqn:AuxFunct}
                F(r)=\frac{4r+3r^2}{(r+1)^2(2r+1)}, \quad r \geq 0,
            \end{equation}
            whose derivative is
            \[
                F'(r) = -\dfrac{2(3r+2)(r^2+r-1)}{(r+1)^3(2r+1)^2}.
            \]
            It is elementary to check that the unique critical point in the range $[0, \infty)$ is where the function attains its maximum. That critical point is $r = \frac{\sqrt{5}-1}{2}$ which is, in fact, a fixed point of $F$. This completes the proof.
        \end{proof}
	
	\section{Lower bound and characterization of extremal functions} \label{sec:NormAttaining}
	
    	In this section we will deduce that the upper bound in Theorem \ref{thm:UpperBound} is sharp. Actually, we will find all of the extremal functions. That is, all of the functions $f \in H^4$ such that
    	\[
    	   \| B f \|_{H^4}^4 = \varphi \| f \|_{H^4}^4.
    	\]
    	
    	\begin{lemma} \label{lem:NormAttaining}
    		The backward shift is a norm-attaining operator in $H^4$.
    	\end{lemma}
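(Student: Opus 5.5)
To prove Lemma \ref{lem:NormAttaining}, the simplest route is to produce an explicit function $f \in H^4$ with $\|Bf\|_{H^4}^4 = \varphi \|f\|_{H^4}^4$. Since Theorem \ref{thm:UpperBound} shows that $\varphi$ is an upper bound, such an $f$ proves that the norm equals $\sqrt[4]{\varphi}$ and is attained. The candidate is suggested by the equality analysis in the proof of Theorem \ref{thm:UpperBound}. Equality forces equality in \eqref{eqn:CubicBound} for $h = Bf$, so by Lemma \ref{lemma} we get $h = \mu B I$ for an inner function $I$. Hence $f = c + \mu(I - I(0))$ for some constant $c$. So I will take
\[
f = I - \beta,
\]
where $I$ is inner, $I(0) = a \in (0,1)$ and $\beta > 0$, and then choose $a$ and $\beta$. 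An inner function with any prescribed $I(0) = a$ exists, for instance a Möbius map $I(z) = \frac{z+a}{1+az}$.

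The computation uses $|I| = 1$ on $\T$ together with the mean value property $\int I^k = a^k$ for $k \geq 0$. Then $|f|^2 = 1 + \beta^2 - 2\beta \Re I$, and therefore
\[
\|f\|_{H^4}^4 = \int (1+\beta^2 - 2\beta\Re I)^2 = (1+\beta^2)^2 - 4\beta(1+\beta^2)a + 4\beta^2 \int (\Re I)^2 .
\]
Here $\int (\Re I)^2 = \tfrac12 + \tfrac12 \Re \int I^2 = \tfrac12(1+a^2)$. Similarly, $Bf = \bar z (I - a)$ on $\T$, so $\|Bf\|_{H^4}^4 = \|I - a\|_{L^4}^4$, which is the same formula with $\beta = a$. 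This gives $(1+a^2)^2 - 4a^2(1+a^2) + 2a^2(1+a^2) = 1 - a^4$.

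The extremal parameters can be read off from the proof of Theorem \ref{thm:UpperBound}. There $f_n(0) = x$, $s = \|Bf\|_2^2 = |\mu|^2(1-a^2)$ and $r = x^2/s = (\sqrt5-1)/2 = 1/\varphi$, with the normalization $\|f\|_4 = 1$. Rather than track the normalization, it is cleaner to substitute the values from the abstract directly, $a = \sqrt{\varphi/2}$ and $\beta = 1/\sqrt{2\varphi}$, and check the ratio. With $a^2 = \varphi/2$ we get $\|Bf\|^4 = 1 - \varphi^2/4$. On the other side, $\beta^2 = 1/(2\varphi)$ and $a\beta = 1/2$. Writing $t = 1 + \beta^2$, the expression becomes
\[
\|f\|^4 = t^2 - 2t + 2\beta^2(1 + a^2) = t^2 - 2t + \tfrac{1}{\varphi} + \tfrac12 .
\]
Using $\varphi^2 = \varphi + 1$ and $1/\varphi = \varphi - 1$, both sides reduce to expressions in $\mathbb{Q}(\sqrt5)$. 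The identity $1 - \varphi^2/4 = \varphi\,\|f\|^4$ then becomes a short algebraic check.

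The main obstacle is this algebraic verification, together with picking the correct sign and value of $\beta$ relative to $a$. The equality-case heuristic gives $f(0) = a - \beta$, which has the sign opposite to the $\langle h^2, Bh\rangle < 0$ condition, so the signs have to be matched consistently. If the abstract's values fail the check, the fallback is to treat $a$ and $\beta$ as unknowns and maximize
\[
R(a,\beta) = \frac{1 - a^4}{(1+\beta^2)^2 - 4\beta(1+\beta^2)a + 2\beta^2(1+a^2)} .
\]
First set $\partial_\beta R = 0$, then optimize over $a$. The maximum value is already known to be at most $\varphi$ by Theorem \ref{thm:UpperBound}, so it suffices to exhibit one pair $(a,\beta)$ that achieves it.
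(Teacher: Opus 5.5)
Your proposal is correct and is essentially the paper's proof: the same test function $f = I - 1/\sqrt{2\varphi}$ with $I(0)=\sqrt{\varphi/2}$, the same identity $\|BI\|_{H^4}^4 = 1-|I(0)|^4$, and the same computation of $\|f\|_{H^4}^4$. The paper expands $f^2$ in $H^2$ where you square $|f|^2$, which amounts to the same calculation.

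The check you left open does go through. Since $t^2-2t=(t-1)^2-1=\frac{1}{4\varphi^2}-1$, you get $\|f\|_{H^4}^4=\frac1\varphi-\frac12+\frac{1}{4\varphi^2}$. Multiplying by $\varphi$ and using $\frac1\varphi=\varphi-1$ gives $1-\frac{\varphi+1}{4}=1-\frac{\varphi^2}{4}$. No sign adjustment or fallback optimization is needed: $f(0)=a-\beta=(\varphi-1)/\sqrt{2\varphi}>0$ and $\langle h^2,Bh\rangle=-a(1-a^2)<0$ are already consistent.
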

    	
    	\begin{proof}
    		As a consequence of Theorem \ref{thm:UpperBound}, it is enough to find a function $f$ with $\| Bf \|_{H^4}^4 = \varphi \| f \|_{H^4}^4$.
    		
    		Let $I$ be any inner function with $I(0) = \sqrt{\frac{\varphi}{2}}$ and set
        		\[
        		      f(z) = I (z) - \sqrt{\dfrac{1}{2\varphi}}.
        		\]
        		On the one hand, $Bf$ coincides with $B I$ and for every inner function the identity
        		\begin{equation} \label{eqn:NormInner}
        			\| B I \|_{H^4}^4 = 1 - |I(0)|^4
        		\end{equation}
        		holds, so $\| Bf \|_{H^4}^4 = 1 - \frac{\varphi^2}{4}$.
    		
    		On the other hand, since $\varphi^2 = \varphi + 1$ we have that
        		\begin{align*}
        			\| f \|_{H^4}^4 & = \left \| I^2 - \sqrt{\dfrac{2}{\varphi}} I + \dfrac{1}{2\varphi} \right \|_{H^2}^2 \\
            			&  = 1 + \dfrac{2}{\varphi} + \dfrac{1}{4 \varphi^2} - 2 + \dfrac{1}{2} - \dfrac{1}{\varphi} \\
            			& = \dfrac{1}{\varphi} -\dfrac{1}{2} + \dfrac{1}{4 \varphi^2} \\
            			& = \dfrac{1}{\varphi} \left(1 - \dfrac{\varphi}{2} + \dfrac{\varphi - 1}{4} \right) = \dfrac{1}{\varphi} \left( 1 - \dfrac{\varphi^2}{4}\right).
        		\end{align*}
    		
    		Thus, the function $f$ must be an extremal function.
    	\end{proof}
    	
    	\begin{coro}
    		A function $f \in H^4$ is extremal for the backward shift if and only if there exists $\mu \in \C$ and inner function $I$ with $I(0) = \sqrt{\frac{\varphi}{2}}$ such that
    		\[
    		      f(z) = \mu \left( I(z) - \sqrt{\dfrac{1}{2\varphi}}\right).
    		\]
    	\end{coro}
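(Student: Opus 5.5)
The "if" direction is Lemma~\ref{lem:NormAttaining} together with homogeneity of the norm, so only the "only if" direction needs work. Let $f$ be extremal and normalize so that $\|f\|_{H^4}=1$ and $f(0)\ge 0$. The plan is to run the proof of Theorem~\ref{thm:UpperBound} with the constant sequence $f_n=f$; this is legitimate because an extremal function realizes $M$. Every limit then becomes an exact quantity. Set $x=f(0)$, $h=Bf$ and $s=\|h\|_{H^2}^2$. The perturbation argument with $f+a$ is now exact, since $\|f+a\|_{H^4}\ge \|f\|_{H^4}$ by extremality, and it gives $\langle f^2,f\rangle=0$. Identity \eqref{eqn:aux1} reads $\varphi=1+4x^2s+3x^4$, and we get $\langle h^2,Bh\rangle=-x^3-2xs$.

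Applying Lemma~\ref{lemma} to $h$ gives \eqref{eqn:aux2}, and the chain of inequalities ends with $\varphi\le 1+F(r)\le 1+F(r_0)=\varphi$, where $r_0=\frac{\sqrt5-1}{2}$. Every inequality in the chain must therefore be an equality. First, $r=x^2/s$ is the unique maximizer $r_0=1/\varphi$. Second, $s^2(r+1)^2(2r+1)=1$, and with $r+1=\varphi$ and $2r+1=\sqrt5$ this determines $s$ and hence $x$. Third, and most importantly, \eqref{eqn:CubicBound} holds with equality for $h$. The equality case of Lemma~\ref{lemma} then gives $h=\nu BI$ for some inner function $I$ and some $\nu\in\C$.

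Since $B$ only forgets the value at $0$, we get $f=\nu I+c$ for some constant $c$, so $f-f(0)=\nu(I-I(0))$. It remains to pin down $|I(0)|$ and $c$; this step is the main obstacle. Rotating $I$ by a unimodular factor, we may take $\nu>0$. We have $\|h\|_{H^2}^2=\nu^2(1-|I(0)|^2)$ and $\|h\|_{H^4}^4=\nu^4(1-|I(0)|^4)$ by \eqref{eqn:NormInner}, and the constraint $\langle f^2,f\rangle=0$ becomes $\langle (\nu I+c)^2,\nu I+c\rangle=0$. Expanding with $|I|=1$ on $\T$ gives the inner products $\langle I^2,I\rangle=\overline{\langle I, I^2\rangle}$ and so on, which reduce to $I(0)$, $\overline{I(0)}$ and constants. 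I expect this to yield an explicit algebraic relation between $c$ and $I(0)$.

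Combining that relation with the known values of $x=|f(0)|$ and $s$ should force $|I(0)|^2=\varphi/2$. After a rotation giving $I(0)=\sqrt{\varphi/2}$, it should also force $c=-\nu\sqrt{1/(2\varphi)}$, matching the extremal in Lemma~\ref{lem:NormAttaining}. A cleaner alternative for this last step is available. Within the family $\nu I+c$ with $|I(0)|=t$ fixed, the ratio $\|B(\nu I+c)\|^4/\|\nu I+c\|^4$ depends only on $t$ and on $c/\nu$, and it can be computed directly. One would then show that this two-parameter expression attains the value $\varphi$ only at $t^2=\varphi/2$ and $c/\nu=-\overline{I(0)}^{-1}\cdot\frac{1}{2}$, equivalently $-\sqrt{1/(2\varphi)}$ after normalization. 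This is elementary calculus but requires some care with the complex phase of $c$ relative to $I(0)$. The phase should be handled by noting that the $H^4$ norm of $I+c$ is minimized when $c\,\overline{I(0)}$ is real and negative.
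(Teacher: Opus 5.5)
This is essentially the paper's proof: you run the proof of Theorem~\ref{thm:UpperBound} with $f_n=f$, force equality throughout the chain, and apply the equality case of Lemma~\ref{lemma} to $h=Bf$ to get $f=\nu I+c$, exactly as the paper does. The step you leave as an expectation does close as you predict, and it takes only a few lines; it is the paper's sign argument for $\mu$. Since $\langle (BI)^2,B^2I\rangle=-I(0)\big(1-|I(0)|^2\big)$ and $s=\nu^2\big(1-|I(0)|^2\big)$, your constraint $\langle f^2,f\rangle=0$ (equivalently $\langle h^2,Bh\rangle=-x^3-2xs$) reads $\nu I(0)=x(r+2)=\varphi^2x>0$. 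Hence $c=x-\nu I(0)=-\varphi x$ and $\nu^2=s+\varphi^4x^2=2\varphi^2 s$ (using $x^2=s/\varphi$), so $I(0)=\sqrt{\varphi/2}$ with no further rotation and $c=-\nu/\sqrt{2\varphi}$; the two-parameter optimization in your ``cleaner alternative'', whose uniqueness claim you did not prove, is therefore unnecessary.
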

    	
    	\begin{proof}
    		Let $f$ be a normalized extremal function with $f(0) > 0$. Then we can repeat the proof of Theorem \ref{thm:UpperBound} choosing $f_n = f$ for all $n$.
    		In particular, the function $h = B f$ must attain the equality in \eqref{eqn:CubicBound}. Hence, there exist $\mu \in \C \setminus \{ 0 \}$ and an inner function $I$ with $I(0) > 0$ such that
        		\[
        		      Bf(z) = \mu BI (z),
        		\]
        	    so $|\mu|^4 \big(1 - I^4(0) \big) = \varphi$ by the identity \eqref{eqn:NormInner}. The same argument yields that $\langle  B^2f,(Bf)^2\rangle$ is negative and
        		\[
        		-\langle  B^2f,(Bf)^2\rangle=|\mu|^2\overline{\mu}I(0)\big (1-I^2(0) \big)
        		\]
        		and thus $\mu$ is real and positive. Also, by the proof of Theorem \ref{thm:UpperBound}, we have that the extremal case must satisfy 
        		\[
                    \| Bf \|_{H^2}^4 (r + 1)^2 (2r + 1) = 1, \quad \mbox{with } r=\frac{\sqrt{5}-1}{2} = \varphi - 1,
                \]
                and thus
        		\[
        		\mu^4 \big(1 - I^2(0) \big)^2 =\| Bf \|_{H^2}^4 = \dfrac{1}{\varphi^2(2 \varphi - 1)} = \dfrac{1}{3\varphi + 1}
        		\]
        		and hence $I^2(0) = \frac{\varphi}{2}$ and $\mu^4 = \frac{4\varphi}{3 - \varphi}$.  Maximizing the function \eqref{eqn:AuxFunct} requires that
        		\[
        		f^2(0) = (\varphi - 1) \| Bf \|_{H^2}^2 = \dfrac{\varphi - 1}{\sqrt{3 \varphi + 1}}.
        		\]
        		This means that
        		\[
        		f(z) = \mu \left( I(z) - \sqrt{\dfrac{\varphi}{2}}\right) + f(0) = \mu \left( I(z) - \sqrt{\dfrac{1}{2\varphi}}\right).
        		\]
    		
    		The other implication follows from the proof of Lemma \ref{lem:NormAttaining}.	
    	\end{proof}

	\section{Concluding remarks} \label{sec:ConcludingRemarks}
	
    	The first remark of this section is regarding the essential norm of $B$. Recall that the essential norm of a bounded operator $T$ on $H^p$ is defined as the infimum of all the norms of compact perturbations of $T$. Shargorodsky \cite[Theorem~5.1]{MR4169308} showed that the norm of the backward shift on $H^p$, $1 < p < \infty$, coincides with its essential norm. Thus, combining this result with Theorem~\ref{thm:NormH4} we get the next corollary.
    	
    	\begin{coro}
    		The essential norm of $B$ on $H^4$ is equal to $\sqrt[4]{\varphi}$.
    	\end{coro}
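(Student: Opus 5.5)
This corollary should follow by combining Theorem~\ref{thm:NormH4} with Shargorodsky's result \cite[Theorem~5.1]{MR4169308}, which says that on $H^p$ with $1<p<\infty$ the norm of $B$ equals its essential norm. Taking $p=4$, the two facts together give $\|B\|_{e}=\|B\|=\sqrt[4]{\varphi}$.

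For a more self-contained argument I would check the two inequalities separately. The upper bound is immediate: $\|B\|_e\le \|B\|\le\sqrt[4]{\varphi}$ by Theorem~\ref{thm:UpperBound}, since the zero operator is a compact perturbation. For the lower bound I would use the extremal functions from Lemma~\ref{lem:NormAttaining}, composed with high powers of $z$. Fix an extremal $f=I-\sqrt{1/(2\varphi)}$ and set $f_n(z)=f(z^n)$. Composition with $z^n$ preserves $H^p$ norms, because $z^n$ is inner and vanishes at $0$, so the boundary distribution of $f_n$ is that of $f$. Writing $f_n-f_n(0)=(f-f(0))(z^n)$, we get
\[
\|Bf_n\|_{H^4}=\|f_n-f_n(0)\|_{H^4}=\|f-f(0)\|_{H^4}=\sqrt[4]{\varphi}\,\|f_n\|_{H^4}.
\]
Next, $f_n-f_n(0)$ has only Fourier frequencies that are multiples of $n$, so $f_n\to f(0)$ weakly in $H^4$. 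The normalized differences $u_n=(f_n-f(0))/\|f-f(0)\|_{H^4}$ are then a weakly null sequence of unit vectors.

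For any compact $K$ we have $\|Ku_n\|\to0$, so
\[
\|B+K\|\ge\limsup\|(B+K)u_n\|=\limsup\|Bu_n\|,
\]
and this limit should equal $\sqrt[4]{\varphi}$. The main obstacle is that $u_n$ is not itself extremal: subtracting $f(0)$ changes the ratio $\|Bu\|/\|u\|$, since $B$ ignores constants but the norm does not. To get around this I would instead choose extremal functions with $f(0)\to0$ relative to the norm. Alternatively, I would use Shargorodsky's general principle that $B$ acts on "high-frequency" pieces essentially isometrically, with its norm realized along weakly null sequences. That subtlety is exactly what \cite{MR4169308} handles, so I would rely on that citation rather than redo it.
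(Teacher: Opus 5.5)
Your first paragraph is exactly the paper's proof: the corollary is just Shargorodsky's theorem $\|B\|_e=\|B\|$ on $H^p$, $1<p<\infty$, combined with Theorem~\ref{thm:NormH4}.

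Your optional self-contained aside does not work as sketched, and neither suggested fix can rescue it. If $u_n\rightharpoonup 0$ with $\|u_n\|_{H^4}=1$, then $u_n(0)\to 0$, so $\|Bu_n\|_{H^4}=\|u_n-u_n(0)\|_{H^4}\to 1$; no weakly null sequence can give more than $1$. What works is to test, for an extremal $f$, on $(B+K)f(z^n)=\overline{z}\big(f(z^n)-f(0)\big)+K\big(f(z^n)-f(0)\big)+f(0)K1$ on $\T$. The middle term tends to $0$, and averaging over $n$-th roots of unity gives $\|\overline{z}(f(z^n)-f(0))+w\|_{L^4(\T)}\ge\|f-f(0)\|_{H^4}-o(1)$ for each fixed $w\in H^4$, whence $\|B+K\|\ge\sqrt[4]{\varphi}$.
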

    	
    	The final remark is about the iterates of $B$ and their connection to the Riesz projection. For every $n \geq 2$ let $B_n := B \circ B_{n-1}$ and $B_1$ is just the backward shift. Since $B_n = B_{n+1} \circ S$, it is then clear that the sequence of their norms, $\{ \| B_n \|_{p} \}_{n \geq 1}$, is increasing. Furthermore, trigonometric polynomials are dense in $L^p ( \T)$ and hence it can be checked that
    	\begin{equation} \label{eqn:Riesz}
    		\lim_{n \to \infty} \| B_n \|_{p} = \| P_{+} \|_{p} = \dfrac{1}{\sin \frac{\pi}{p}}, \quad 1 < p < \infty.
    	\end{equation}
    	Thus, we have the following:
    	
    	\begin{coro}
    		For every $n \geq 2$ we have that
    		\[
    		\sqrt[4]{\varphi} \leq \| B_n \|_{H^4} \leq \sqrt{2}.
    		\]
    	\end{coro}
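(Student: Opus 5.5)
Write $B_n$ for the $n$-th iterate. I would prove the two inequalities separately, using monotonicity of the norms for the lower bound and a bound on the Riesz projection for the upper bound.

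\emph{Lower bound.} We have $B_1=B$, and the identity $B_n = B_{n+1}\circ S$ holds because $B(Sf)=f$ for every $f$. Since $S$ is an isometry on $H^4$, this gives
\[
\|B_n f\|_{H^4} = \|B_{n+1}(Sf)\|_{H^4} \le \|B_{n+1}\|_{H^4}\|Sf\|_{H^4} = \|B_{n+1}\|_{H^4}\|f\|_{H^4}.
\]
Hence $\{\|B_n\|_{H^4}\}_{n\ge1}$ is nondecreasing. By Theorem~\ref{thm:NormH4}, $\|B_1\|_{H^4}=\sqrt[4]{\varphi}$, so $\|B_n\|_{H^4}\ge\sqrt[4]{\varphi}$ for all $n\ge 2$.

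\emph{Upper bound.} I would first bound each $\|B_n\|_{H^4}$ directly by $\|P_+\|_4$, rather than only through the limit in \eqref{eqn:Riesz}. On $\T$ we have
\[
B_n f = \overline{z}^{\,n}\Big(f - \sum_{k<n}\hat f(k)z^k\Big).
\]
Multiplication by $\overline{z}^{\,n}$ is an isometry of $L^4(\T)$, so
\[
\|B_nf\|_{H^4} = \Big\|f-\sum_{k<n}\hat f(k)z^k\Big\|_{L^4}.
\]
Now $f-\sum_{k<n}\hat f(k)z^k = z^nP_+(\overline{z}^{\,n}f)$, where $\overline{z}^{\,n}f\in L^4$ has the same $L^4$ norm as $f$. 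Therefore
\[
\|B_nf\|_{H^4}\le \|P_+\|_4\,\|f\|_{H^4}.
\]
By the Hollenbeck--Verbitsky theorem quoted in \eqref{eqn:Riesz}, $\|P_+\|_4 = 1/\sin(\pi/4)=\sqrt2$, which gives $\|B_n\|_{H^4}\le\sqrt2$.

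The only nontrivial ingredient is the sharp value $\|P_+\|_4=\sqrt2$, which I take as known. As a consistency check, $\sqrt[4]{\varphi}\approx 1.128<\sqrt2$, so the two bounds are compatible.
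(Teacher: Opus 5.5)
Your proof is correct and follows the paper's route: the lower bound comes from the monotonicity $\|B_n\|_{H^4}\le\|B_{n+1}\|_{H^4}$, which follows from $B_n=B_{n+1}\circ S$, combined with Theorem~\ref{thm:NormH4}, and the upper bound rests on the Hollenbeck--Verbitsky value $\|P_+\|_4=\sqrt2$. The only difference is in how the upper bound is reached: the paper combines monotonicity with the limit \eqref{eqn:Riesz}, leaving the density argument to the reader, whereas you prove directly the easy half $\|B_n\|_{H^4}\le\|P_+\|_4$ via $f-\sum_{k<n}\hat f(k)z^k=z^nP_+(\overline{z}^{\,n}f)$, which is all the upper bound needs.
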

    	
    	An interesting (and challenging) problem would be to establish the veracity of \eqref{eqn:Riesz} without having to rely on the fact that the norm of the Riesz projection is currently known. If this were possible, it would constitute an alternative proof from the original by Hollenbeck and Verbitsky \cite{MR1780482} without the need to use plurisubharmonic functions. Although these functions are a very powerful tool for solving extremal problems (in addition to \cite{MR1780482}, see for instance \cite{MR4009386} or \cite{MR4721797}), a constructive argument based of the iterates of $B$ would help to improve our understanding of the behavior of $P_+$.

	\bibliographystyle{amsplain}
	\bibliography{Arxiv_biblio}

\providecommand{\bysame}{\leavevmode\hbox to3em{\hrulefill}\thinspace}
\providecommand{\MR}{\relax\ifhmode\unskip\space\fi MR }
\providecommand{\MRhref}[2]{%
  \href{http://www.ams.org/mathscinet-getitem?mr=#1}{#2}
}
\providecommand{\href}[2]{#2}
\begin{thebibliography}{10}

\bibitem{MR27954}
A.~Beurling, \emph{On two problems concerning linear transformations in
  {H}ilbert space}, Acta Math. \textbf{81} (1948), 239--255. \MR{27954}

\bibitem{MR4386412}
O.~F. Brevig, J.~Ortega-Cerd\`a, and K.~Seip, \emph{Idempotent {F}ourier
  multipliers acting contractively on {$H^p$} spaces}, Geom. Funct. Anal.
  \textbf{31} (2021), no.~6, 1377--1413. \MR{4386412}

\bibitem{MR4796279}
O.~F. Brevig and K.~Seip, \emph{The norm of the backward shift on {$H^1$} is
  {$\frac{2}{\sqrt 3}$}}, Pure Appl. Funct. Anal. \textbf{9} (2024), no.~4,
  991--994. \MR{4796279}

\bibitem{MR1761913}
J.~A. Cima and W.~T. Ross, \emph{The {B}ackward {S}hift on the {H}ardy
  {S}pace}, Mathematical Surveys and Monographs, vol.~79, American Mathematical
  Society, Providence, RI, 2000. \MR{1761913}

\bibitem{MR270196}
R.~G. Douglas, H.~S. Shapiro, and A.~L. Shields, \emph{Cyclic vectors and
  invariant subspaces for the backward shift operator}, Ann. Inst. Fourier
  (Grenoble) \textbf{20} (1970), 37--76. \MR{270196}

\bibitem{MR3770837}
T.~Ferguson, \emph{Bounds on the norm of the backward shift and related
  operators in {H}ardy and {B}ergman spaces}, Illinois J. Math. \textbf{61}
  (2017), no.~1-2, 81--96. \MR{3770837}

\bibitem{MR1780482}
B.~Hollenbeck and I.~E. Verbitsky, \emph{Best constants for the {R}iesz
  projection}, J. Funct. Anal. \textbf{175} (2000), no.~2, 370--392.
  \MR{1780482}

\bibitem{JimenezVukotic}
I.~Jim\'enez and D.~Vukoti\'c, \emph{Norm estimates for the backward shift}, To
  appear in arXiv, 2026.

\bibitem{MR4009386}
D.~Kalaj, \emph{On {R}iesz type inequalities for harmonic mappings on the unit
  disk}, Trans. Amer. Math. Soc. \textbf{372} (2019), no.~6, 4031--4051.
  \MR{4009386}

\bibitem{MR4643433}
O.~Karlovych and E.~Shargorodsky, \emph{When are the norms of the {R}iesz
  projection and the backward shift operator equal to one?}, J. Funct. Anal.
  \textbf{285} (2023), no.~12, Paper No. 110158, 29. \MR{4643433}

\bibitem{MR4721797}
P.~Melentijevi\'c, \emph{Hollenbeck-{V}erbitsky conjecture on best constant
  inequalities for analytic and co-analytic projections}, Math. Ann.
  \textbf{388} (2024), no.~4, 4405--4448. \MR{4721797}

\bibitem{MR827223}
N.~K. Nikolskii, \emph{Treatise on the {S}hift {O}perator}, Grundlehren der
  mathematischen Wissenschaften [Fundamental Principles of Mathematical
  Sciences], vol. 273, Springer-Verlag, Berlin, 1986, Spectral function theory,
  With an appendix by S. V. Hru\v s\v cev [S. V. Khrushch\"ev] and V. V.
  Peller, Translated from the Russian by Jaak Peetre. \MR{827223}

\bibitem{MR4169308}
E.~Shargorodsky, \emph{On the essential norms of {T}oeplitz operators with
  continuous symbols}, J. Funct. Anal. \textbf{280} (2021), no.~2, Paper No.
  108835, 11. \MR{4169308}

\end{thebibliography}
	
\end{document}